\theoremstyle{plain}
\newtheorem{thm}{Theorem}
\newtheorem{lem}[thm]{Lemma}
\theoremstyle{definition}
\theoremstyle{remark}
\newtheorem{rem}[thm]{Remark}
\newtheorem*{qsnonum}{Question}
\theoremstyle{plain}
\newcommand{\Id}{{{\mathchoice {\rm 1\mskip-4mu l} {\rm 1\mskip-4mu l}
      {\rm 1\mskip-4.5mu l} {\rm 1\mskip-5mu l}}}}
\def\NN{\mathbb{N}}
\def\ZZ{\mathbb{Z}}
\def\RR{\mathbb{R}}
\def\Id{\mathbb{1}}
\def\dd{\mathrm{d}}
\DeclareMathOperator{\Ham}{Ham}
\DeclareMathOperator{\Diff}{Diff}
\DeclareMathOperator{\Cal}{Cal}
\DeclareMathOperator{\Vect}{Vect}
\DeclareMathOperator{\Area}{Area}
\definecolor{lgray}{rgb}{0.5,0.5,0.5}
\begin{document}

\pagestyle{headings}

\bibliographystyle{alphanum}

\title[$C^0$-gap between entropy-zero Hamiltonians and autonomous diffeomorphisms]{$C^0$-gap between entropy-zero Hamiltonians and autonomous diffeomorphisms of surfaces}

\date{\today} 

\author{Michael Brandenbursky}
\address{Michael Brandenbursky, Department of Mathematics,
Ben Gurion University of the Negev, Beer Sheva, Israel}
\email{brandens@bgu.ac.il}

\author{Michael Khanevsky}
\address{Michael Khanevsky, Faculty of Mathematics,
Technion - Israel Institute of Technology
Haifa, Israel}
\email{khanev@technion.ac.il}

\begin{abstract}
Let $\Sigma$ be a surface equipped with an area form. There is an long standing open question by Katok,
which, in particular, asks whether every entropy-zero Hamiltonian diffeomorphism of a surface 
lies in the $C^0$-closure of the set of integrable diffeomorphisms.
A slightly weaker version of this question asks: ``Does every entropy-zero Hamiltonian 
diffeomorphism of a surface lie in the $C^0$-closure of the set of autonomous diffeomorphisms?''

In this paper we answer in negative the later question. In particular, 
we show that on a surface $\Sigma$ the set of autonomous Hamiltonian diffeomorphisms is not $C^0$-dense
in the set of entropy-zero Hamiltonians. We explicitly construct examples of such Hamiltonians which cannot be approximated
by autonomous diffeomorphisms.
\end{abstract}

\maketitle

\section{Introduction}

There is the following long standing open question of Katok from seventies:
"\emph{In low dimensions is every volume-preserving dynamical system with zero topological entropy a limit of integrable systems?}"
This is stated as Problem 1 in ~\cite{Katok}, but relates also to the much older paper ~\cite{An-Kat}. 
Low dimensions means maps on surfaces or flows on $3$-dimensional manifolds. 
 
Let $\Sigma$ be a surface with an area form $\omega$ and
denote by $\Ham(\Sigma)$ the group of Hamiltonian diffeomorphisms of $(\Sigma, \omega)$ that are compactly supported in the interior of $\Sigma$. For every smooth function
$H\colon\Sigma\to\RR$ compactly supported in the interior, there exists a unique vector field $X_H$ which satisfies
$$
dH(\cdot)=\omega(X_H,\cdot).
$$
It is easy to see that $X_H$ is tangent to the level sets of $H$. Let $h$ be the
time-one map of the flow $h_t$ generated by $X_H$. The diffeomorphism $h$ is
area-preserving, it belongs to $\Ham(\Sigma)$ and every Hamiltonian diffeomorphism arising this way is called
{\em autonomous}. Such a diffeomorphism is easy to understand
in terms of its generating function.  More generally, autonomous maps $h \in \Diff (\Sigma)$ appear as time-$t$ maps
of time-independent flows and also have relatively simple dynamics.

We consider the following version of Katok's 
\begin{qsnonum}
Does every entropy-zero Hamiltonian of $\Sigma$ lie in the $C^0$-closure of the set of autonomous diffeomorphisms?
\end{qsnonum}

There was a very little progress on both questions till an appearance of a beautiful paper of Bramham ~\cite{Bramham}, where he partially solved
them in the case of the disc. More precisely, he showed that every irrational pseudo-rotation of the disc lies in the $C^0$-closure of the set of 
integrable autonomous diffeomorphisms (which are not necessarily area-preserving).

In this paper we answer in negative the above question for every surface $\Sigma$. Our main result is the following

\begin{thm} \label{T:main}
There exists an entropy-zero $g \in \Ham(\Sigma)$ which cannot be presented as a $C^0$-limit of autonomous diffeomorphisms.
\end{thm}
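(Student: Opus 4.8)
The plan is to find a conjugation-invariant (or at least $C^0$-continuous) quantity that vanishes — or is bounded — on every autonomous diffeomorphism of $\Sigma$ but is unbounded, hence nonzero in a robust way, for a suitably chosen $g \in \Ham(\Sigma)$. The natural candidates are \emph{quasi-morphisms} on $\Ham(\Sigma)$ that are Lipschitz with respect to the $C^0$ (or Hofer, but here we need $C^0$) metric; Gambaudo--Ghys-type quasi-morphisms built from the linking of orbits, or from braiding of point configurations under the flow, are the prototype. The key structural fact I would aim to exploit is that an autonomous diffeomorphism $h$ has very constrained orbit dynamics: its orbits stay on level sets of $H$, so the braid they trace out is essentially a product of commuting "rotations" along a one-dimensional foliation, and any such linking-type quasi-morphism is therefore \emph{uniformly bounded} (independently of $h$) on the set of autonomous maps. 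If, on the other hand, one of these quasi-morphisms can be shown to be $C^0$-continuous — or even just $C^0$-Lipschitz on a $C^0$-ball — then a $g$ on which the quasi-morphism takes a large value cannot lie in the $C^0$-closure of the autonomous set.

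Concretely, the steps I would carry out are: (1) Fix a model subsurface — an annulus $A \subset \Sigma$ embedded in the interior, or several disjoint annuli — and recall/construct a quasi-morphism $\mu$ on $\Ham(\Sigma)$ (or on $\Ham(A)$, then extended) that is homogeneous, and verify it is $C^0$-continuous on the relevant class of diffeomorphisms; the annulus is the right place because there the rotation number / Calabi-type invariants interact with $C^0$-small perturbations in a controlled way. (2) Prove the uniform bound $|\mu(h)| \le C$ for all autonomous $h$: here I would use that the flow of $X_H$ preserves the singular foliation by level curves, so the time-one map is, up to the flow direction, "one-dimensional," and linking numbers of its orbits grow at most linearly with controlled slope — this gives $\mu(h)$ bounded by a constant depending only on $\Sigma$ and $\mu$, not on $H$. (3) Construct $g$: take a Hamiltonian supported in $A$ (or in a union of annuli) built from a sequence of slower-and-slower "twist" maps glued along shrinking subannuli — the standard way to get entropy zero while pushing a rotation-type quasi-morphism to infinity — and verify $g$ has zero topological entropy (each building block is a twist, and a countable concatenation on shrinking supports still has zero entropy by a standard argument) while $\mu(g) = \infty$ or at least $\mu(g) > C$. (4) Conclude: since $\mu$ is $C^0$-continuous (bounded on $C^0$-neighborhoods) and bounded by $C$ on autonomous maps, $g$ is at definite $C^0$-distance from the autonomous set.

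The main obstacle, and the place where the real work lies, is step (1)–(2): ordinary Gambaudo--Ghys quasi-morphisms on surfaces are \emph{not} $C^0$-continuous (the whole point is that $\Ham$ is $C^0$-huge), so one cannot simply invoke them. The paper almost certainly needs a quasi-morphism that is \emph{a priori} $C^0$-Lipschitz — for instance one built from the asymptotic behavior of orbits relative to a fixed "probe" curve or from a fragmentation-type word-length in an appropriate $C^0$-closed generating set — together with the observation that \emph{on autonomous maps} this quantity is computable from the level-set foliation and hence uniformly bounded. Reconciling "$C^0$-continuous" with "unbounded on entropy-zero Hamiltonians but bounded on autonomous ones" is the crux; I expect the construction to go through a carefully chosen invariant of the \emph{isotopy}, exploiting that autonomous maps are time-one maps of flows whose whole trajectory is tightly controlled, whereas the $g$ we build is a time-one map of a flow that, while entropy-zero, forces large linking. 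Everything else — the entropy-zero verification for $g$, the extension of a local $\mu$ to all of $\Sigma$, the final distance estimate — is routine once that continuity-versus-boundedness dichotomy is in place.
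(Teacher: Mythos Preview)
Your proposal names the right difficulty but does not resolve it: you yourself observe that Gambaudo--Ghys quasi-morphisms are not $C^0$-continuous, and you do not supply a substitute. Without an actual $C^0$-continuous invariant that is bounded on autonomous maps and large on your candidate $g$, steps (1)--(4) are a wish list, not a proof. Your step (3) is also shaky on its own terms: a concatenation of twists on shrinking subannuli is a $C^0$-limit of \emph{commuting} autonomous maps, and there is no argument offered that it is not itself such a limit (or even autonomous).

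The paper's proof of Theorem~\ref{T:main} is completely different and uses no quasi-morphisms at all. The map is the explicit composition $g = \varphi \circ \psi$ on an embedded annulus $A \subset \Sigma$: $\varphi$ is a half-rotation of $A$ (so $\varphi^2 = \Id_A$) and $\psi$ is a compactly supported radial rotation in a disk $D \subset A$ with $\varphi(D) \cap D = \emptyset$. Integrability, hence zero entropy, is read off from an explicit invariant lamination. The obstruction to $C^0$-approximation by an autonomous $h$ is an elementary fixed-point argument: such an $h$ must displace $D$ (so the generating flow has no equilibria in $D$), while $h^2$ is $C^0$-close to $g^2\big|_D = \psi$, and a Poincar\'e--Hopf winding count forces $h^2$ to have a fixed point $p$ in a subdisk $D' \subset D$. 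Since $p$ is not an equilibrium it lies on a closed flow line $c$; one checks $c \subset D'$, so the disk bounded by $c$ is flow-invariant, contradicting that $h$ displaces $D$. This argument is local, robust under $C^0$-perturbation by inspection, and avoids entirely the continuity problem you flag.

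The paper \emph{does} use $C^0$-continuous Entov--Polterovich Calabi quasi-morphisms, but only for the separate annulus statement (Theorem~\ref{T:annulus}), and even there the mechanism is not a uniform bound over all autonomous maps: it is a computation of values at percentile level sets of the Reeb graph of a putative generating function, which forces a large invariant disk, combined with a rotation-number quasi-morphism that rules such a disk out. Your outline is closer in spirit to that argument, though with the specific invariants and the contradiction mechanism still missing.
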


Diffeomorphism $g$ is constructed as a composition of two autonomous Hamiltonian diffeomorphisms supported in an annulus,
which is symplectically embedded in the surface $\Sigma$. In case when $\Sigma$ is an annulus $S^1 \times [0, 1]$, we 
present an additional argument which is based on the second author result from ~\cite{Kh:non-auton-curves}. It shows

\begin{thm} \label{T:annulus}
There exists an entropy-zero $g \in \Ham(S^1 \times [0, 1])$ which cannot be presented as a $C^0$-limit of autonomous Hamiltonians from 
$\Ham_{Aut} (S^1 \times [0, 1])$. Moreover, given $R>0$, one may ensure that the Hofer distance $d_H (g, \Ham_{Aut}) > R$.
\end{thm}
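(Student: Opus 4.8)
The plan is to reduce the theorem to properties of the $g$-image of a single curve, and to import the curve estimates of \cite{Kh:non-auton-curves}. Fix the core circle $\gamma_0=S^1\times\{1/2\}\subset S^1\times[0,1]$, and for embedded curves $\alpha,\beta$ in its isotopy class set $\delta(\alpha,\beta)=\inf\{\|\rho\|_{\mathrm{Hofer}}:\rho\in\Ham(S^1\times[0,1]),\ \rho(\alpha)=\beta\}$, the induced Hofer distance between curves; for a set $\mathcal{S}$ of such curves write $\delta(\alpha,\mathcal{S})=\inf_{\beta\in\mathcal{S}}\delta(\alpha,\beta)$. Two soft observations drive the argument. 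First, conjugation-invariance of the Hofer norm gives $d_H(\psi,h)\ge\delta(\psi(\gamma_0),h(\gamma_0))$ for any $\psi,h\in\Ham$: with $\rho=h^{-1}\psi$ one has $\delta(\psi(\gamma_0),h(\gamma_0))=\delta(h\rho(\gamma_0),h(\gamma_0))=\delta(\rho(\gamma_0),\gamma_0)\le\|\rho\|_{\mathrm{Hofer}}=d_H(\psi,h)$. Second, since $\gamma_0$ is compact, $C^0$-convergence $h_n\to\psi$ forces Hausdorff convergence $h_n(\gamma_0)\to\psi(\gamma_0)$. I will use the main result of \cite{Kh:non-auton-curves} in the following form: writing $\mathcal{C}_{Aut}=\{h(\gamma_0):h\in\Ham_{Aut}(S^1\times[0,1])\}$, for every $R>0$ there is an embedded curve $\eta_R$ in the isotopy class of $\gamma_0$ (enclosing the same area) that does not lie in the $C^0$-closure of $\mathcal{C}_{Aut}$ and satisfies $\delta(\eta_R,\mathcal{C}_{Aut})>R$. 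Locating the precise statement of \cite{Kh:non-auton-curves} and, if necessary, strengthening it to this shape is itself part of the work.

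Granting this, it suffices to produce, for given $R$, an entropy-zero $g\in\Ham(S^1\times[0,1])$ with $g(\gamma_0)=\eta_R$. Then: if such a $g$ were a $C^0$-limit of diffeomorphisms $h_n\in\Ham_{Aut}(S^1\times[0,1])$, the second observation would give $h_n(\gamma_0)\to\eta_R$ in the Hausdorff metric, placing $\eta_R$ in the $C^0$-closure of $\mathcal{C}_{Aut}$ --- a contradiction; and for every $h\in\Ham_{Aut}$ the first observation gives $d_H(g,h)\ge\delta(\eta_R,h(\gamma_0))\ge\delta(\eta_R,\mathcal{C}_{Aut})>R$, hence $d_H(g,\Ham_{Aut})>R$. (The first conclusion is formally weaker than Theorem~\ref{T:main} specialized to an embedded annulus, since $\Ham_{Aut}(S^1\times[0,1])$ is contained in the autonomous diffeomorphisms; it is convenient to recover it here from the same curve argument.)

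To realize $\eta_R$ as $g(\gamma_0)$ with $g$ of zero topological entropy I would take $g=h_1\circ h_2$ with both factors autonomous Hamiltonian diffeomorphisms. Choose $h_2$ as the time-one map of a Hamiltonian supported in a thin sub-annulus about $\gamma_0$, arranged so that $h_2(\gamma_0)$ is a graph $\{y=u(x)\}$ with a prescribed oscillating profile $u$ staying inside that sub-annulus; and choose $h_1$ as an autonomous twist $(x,y)\mapsto(x+f(y),y)$, the time-one map of a Hamiltonian depending on $y$ alone, with $f$ steep on the range of $u$, so that the sheared curve $h_1(h_2(\gamma_0))$ winds around the annulus in the pattern required of $\eta_R$. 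One then checks that $u$ and $f$ can be matched to the examples of \cite{Kh:non-auton-curves}, so that $g(\gamma_0)=\eta_R$ --- or at least a curve with the same (large) value of Khanevsky's invariant, which is all that the previous paragraph uses. The point of this particular shape is that $g$ has zero topological entropy: off the thin sub-annulus $g$ coincides with the integrable twist $h_1$, while on it $g$ is the composition of an integrable twist with a localized autonomous perturbation whose orbit structure can be written down explicitly and, for a suitable choice of the perturbation, carries no topological horseshoe, so that $h_{\mathrm{top}}(g)=0$ by Katok's theorem (positive entropy of a $C^{1+\alpha}$ surface diffeomorphism would force a horseshoe); alternatively one invokes directly the entropy computation already performed for Theorem~\ref{T:main} in the annular model.

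The step I expect to be the main obstacle is precisely this last simultaneous requirement: $g(\gamma_0)$ must be Hofer-far from $\mathcal{C}_{Aut}$, which forces $g$ to wind and transport area in an essential way, while $g$ itself must have vanishing topological entropy. The two are not in conflict because $\delta(\gamma_0,\cdot)$ and Khanevsky's obstruction are symplectic, Hofer-type invariants of the pair $(\gamma_0,g(\gamma_0))$ --- they record winding and area transport, not dynamical complexity --- so a diffeomorphism assembled from twist-type autonomous pieces can be made far from $\mathcal{C}_{Aut}$ and still be dynamically tame. The two points that will need genuine care are: that the explicit $g$ carries no topological horseshoe, hence has zero entropy; and that the obstruction of \cite{Kh:non-auton-curves} really is a function of the single curve $g(\gamma_0)$ together with the fixed $\gamma_0$, so that it transfers unchanged from Khanevsky's Hamiltonians to our entropy-zero $g$.
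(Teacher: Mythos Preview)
Your approach is genuinely different from the paper's. The paper never reduces to a single curve: it works directly with $C^0$-continuous, Hofer-Lipschitz Calabi-type quasimorphisms $r_{a,b}$ and $\rho_{0.6}$ on $\Ham(S^1\times[0,1])$, evaluates them on the explicit map $g_{T,\tau}=\phi^T\circ\psi^\tau$, and reads off both the $C^0$ and the Hofer statements from the same numerical obstruction. Crucially, the paper's two factors \emph{commute} (for integer $T$ the rotation $\phi^T$ is the identity on the support of $\psi^\tau$), so $g_{T,\tau}$ is integrable --- indeed autonomous in the extended group allowing support up to $\partial\Sigma$ --- and the entropy-zero claim is immediate.

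There is a genuine gap in your entropy step. Your $g=h_1\circ h_2$, with $h_1$ a steep twist $(x,y)\mapsto(x+f(y),y)$ and $h_2$ a localized autonomous perturbation producing vertical oscillation of $\gamma_0$, is exactly the standard mechanism for \emph{creating} positive entropy (cf.\ the standard map, or Zehnder's genericity of transverse homoclinics in twist systems). The heuristic ``off the thin sub-annulus $g$ coincides with the integrable twist'' does not help, because that sub-annulus is not $g$-invariant: $h_1$ shears it, orbits repeatedly re-enter the perturbation zone, and complexity accumulates. Worse, the two requirements pull against each other: to push $\delta(g(\gamma_0),\mathcal C_{Aut})$ past $R$ you need $f$ ever steeper on the range of $u$, and that is precisely what forces horseshoes. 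Your factors do not commute and admit no evident common invariant lamination, so neither the commutation argument nor ``the entropy computation already performed for Theorem~\ref{T:main}'' (which exhibits a lamination tailored to a half-rotation composed with a disk rotation) transfers. A secondary issue: as cited here, \cite{Kh:non-auton-curves} yields a Hofer-distance statement about the diffeomorphism, and it is not clear it delivers the Hausdorff/$C^0$-closure statement on $\mathcal C_{Aut}$ that your first conclusion needs; large $\delta(\eta_R,\mathcal C_{Aut})$ does not by itself prevent $\eta_R$ from lying in the $C^0$-closure of $\mathcal C_{Aut}$.
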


The proof uses the theory of Calabi quasimorphisms due to Entov-Polterovich ~\cite{En-Po:calqm}.
At the same time the general case does not require techniques from symplectic geometry.

We finish the introduction by noting that the constructed diffeomorphism $g$ is integrable, 
hence the original question of Katok remains open. 

\subsection*{Acknowledgments.}
MB~was partially supported by Humboldt research fellowship.

\section{Integrability, laminations and entropy}\label{S:Tools}
 
%

A diffeomorphism $g \in \Diff (\Sigma)$ is \emph{integrable} if it admits an invariant function $H : \Sigma \to \RR$
(a \emph{first integral}) such that $H$ is not constant on any open set.
Such a function $H$ induces a lamination on $\Sigma$ by its level sets. This lamination is regular away from the critical points of $H$ and 
is invariant under $g$.

If $g$ is an autonomous Hamiltonian generated by $H : \Sigma \to \RR$, the function $H$ serves as an integral for $g$. If $H$ is constant in certain
open sets (for example, outside its support) one may perturb $H$ there. Such a perturbed function is still invariant under $g$ since the perturbation occurs 
in the set of fixed points of $g$. Not all integrable Hamiltonians are autonomous, an example is the diffeomorphism $g$ constructed in the proof of
Theorem ~\ref{T:main}.
Note that a non area-preserving autonomous diffeomorphism $g \in \Diff (\Sigma)$ might be non-integrable, yet it admits an invariant 
lamination by flow lines.

Integrable maps on surfaces have zero entropy: indeed, according to ~\cite{Ka:Lyap-ent}, positive entropy implies existence of transverse 
homoclinic points which cannot show up in the integrable setting.
Zero-entropy does not imply integrability, a counterexample is provided by pseudo-rotations on a disk. 


\section{Proof of Theorem~\ref{T:annulus}} \label{S:Annulus}

\subsection{Quasimorphisms}
Let $G$ be a group. A function $r : G \to \RR$ is called a \emph{quasimorphism} if there exists 
a constant $\Delta$ (called the \emph{defect} of $r$) such that $|r(fg) - r(f) - r(g)| < \Delta$ for all $f, g \in G$. The quasimorphism $r$ 
is called \emph{homogeneous} if it satisfies $r(g^m) = mr(g)$ for all $g \in G$ and $m \in \ZZ$.
Any homogeneous quasimorphism satisfies $r(fg) = r(f) + r(g)$ for commuting elements $f, g$. Every quasimorphism
is equivalent (up to a bounded deformation) to a unique homogeneous one ~\cite{Ca:scl}.

\subsection{Calabi invariant and Calabi quasimorphisms} \label{SS:Calabi}

Let $F_t : \Sigma \to \RR$, $t \in [0, 1]$ be a time-dependent smooth function with compact support in the interior of $\Sigma$. We define
$\widetilde{\Cal} (F_t) = \int_0^1 \left( \int_\Sigma F_t \omega \right) \dd t$. If the symplectic form $\omega$ is exact 
(this is the case for an annulus or a disk),
$\widetilde{\Cal}$ descends to a homomorphism $\Cal_\Sigma: \Ham(\Sigma) \to \RR$ which is called 
the Calabi homomorphism.

\medskip

The proposed proof of Theorem~\ref{T:annulus} uses the \emph{Calabi quasimorphism} by Entov-Polterovich ~\cite{En-Po:calqm}. We start with a brief recollection of 
the relevant facts from that paper.

Let $\Sigma = S^1 \times [0, 1]$ be an annulus (we assume $S^1 = \RR / 2\pi \ZZ$ and pick a symplectic form 
$\omega = \frac{1}{2 \pi} \dd \theta \wedge \dd s$ so that $\Area (\Sigma) = 1$).
Given a compactly supported smooth function $F: \Sigma \to \RR$, the \emph{Reeb graph} $T_F$ is defined as the set of connected
components of level sets of $F$ (for a more detailed description we refer the reader to ~\cite{En-Po:calqm}). 
For a generic Morse function $F$ (saying `Morse', we mean that the restriction of $F$ to the interior of its support 
is a Morse function) this set, equipped with topology 
induced by the projection $\pi_F: \Sigma \to T_F$, is homeomorphic to 
a tri-valent tree. We endow $T_F$ with a positive measure given by $\mu (X) = \int_{\pi_F^{-1}(X)} \omega$ 
for all $X \subseteq T_F$ with measurable $\pi_F^{-1}(X)$. 
In the case of the annulus $\Sigma = S^1 \times [0, 1]$, $\pi_F (S^1 \times \{0\})$ will be 
referred to as the \emph{bottom root} of $T_F$ and $\pi_F (S^1 \times \{1\})$ as the \emph{top root}. The shortest path connecting the roots of $T_F$ will be called a \emph{stem}.

A point $x_{m} \in T_F$ is a \emph{median} of $T_F$ if all connected components of $T_F \setminus \{x_m\}$ have measure at most $\frac{\Area (\Sigma)}{2}$.
A median always exists and is unique (see ~\cite{En-Po:calqm}). The set $\pi_F^{-1} (x_m)$ will be called the \emph{median} with respect to $F$.
Suppose $\Sigma = S^1 \times [0, 1]$, we define \emph{percentile} sets in analogy to the median.
Let $h \in [0, 1]$. $x_h \in T_F$ is an \emph{$h$-percentile} of $T_F$ if the top and the bottom roots belong to different connected components of $T_F \setminus \{x_h\}$ 
and the connected component of the bottom root has measure $h \cdot \Area (\Sigma)$. 
The set $\pi_F^{-1} (x_h)$ is an $h$-percentile with respect to $F$. 

Clearly, percentiles correspond to points $x$ in the stem of $T_F$ and the percentile value increases monotonically 
along the stem. Unlike the median, if $T_F$ is not homeomorphic to an interval (that is, has `branches' besides the stem), 
$h$-percentiles do not exist for certain $h \in [0, 1]$. Each branch corresponds to a `gap' (missing interval) in the set of percentile values. Length of the gap is given by the 
measure of the branch normalized by $\Area (\Sigma)$. If an $h$-percentile exists, it is unique. 
The $\frac{1}{2}$-percentile (if it exists) coincides with the median. For a generic $F$ this corresponds to the case when the median set of $F$ is a non-contractible circle.
Using a standard Morse-theoretic argument, we conclude with the following observation: percentile sets are not contractible in $S^1 \times [0, 1]$. 
The set $A_F$ of points that are not percentiles of $T_F$
is the union of branches that grow out of the stem of $T_F$. The set $\pi^{-1} (A_F)$ is the union of topological disks corresponding to these branches.

\medskip

In ~\cite{En-Po:calqm} the authors describe construction of a homogeneous quasimorphism $$\Cal_{S^2} : \Ham (S^2) \to \RR.$$
It has the following properties: $\Cal_{S^2}$ is Hofer-Lipschitz $$|\Cal_{S^2}(\phi)| \leq \Area (S^2) \cdot \| \phi \|_H.$$
In the case when $\phi \in \Ham(S^2)$ is supported in a disk $D$ which is displaceable 
in $S^2$, $\Cal_{S^2} (\phi) = Cal_D (\phi \big|_D)$.
Moreover, for a $\phi \in \Ham(S^2)$ generated by an autonomous function $F: S^2 \to \RR$, 
$\Cal_{S^2} (\phi)$ can be computed in the following way. 
Let $x$ be the median of $T_F$ and $X = \pi_F^{-1} (x)$ be the corresponding subset of $S^2$.
Then 
\[
	\Cal_{S^2}(\phi) = \int_{S^2} F \omega - \Area (S^2) \cdot F(X).
\]

We embed $\Sigma$ into a sphere $S^2_{a,b}$ of area $1 + a + b$ by gluing a disk of area $a$ to $S^1 \times \{0\}$ and a disk of area $b$ to $S^1 \times \{1\}$.
Denote this embedding by $i_{a, b} : \Sigma \to S^2_{a,b}$.
Let $$r_{a,b} = \frac{1}{1+a+b} \cdot \left( \Cal_\Sigma - i_{a,b}^* \Cal_{S^2_{a,b}} \right)$$ be the normalized difference between the Calabi homomorphism on $\Sigma$ and the pullback of the Calabi quasimorphism of $S^2_{a,b}$.
Note that $r_{a,b}$ vanishes on Hamiltonians $g$ supported in a disk $D \subset \Sigma$ of area $\frac{1+a+b}{2}$. Indeed, $i_{a,b} (D)$ is displaceable in $S^2_{a,b}$ thus 
$$\Cal_{S^2_{a,b}} (i_{a,b,*} g) = \Cal_D \left(g \big|_D \right) = \Cal_\Sigma (g).$$ This implies that $r_{a,b}$ is continuous in the $C^0$-topology (see ~\cite{En-Po-Py:qm-continuity}).

Let $F: \Sigma \to \RR$ be a Hamiltonian function, $f$ its time-$1$ map and suppose that $-1 \leq b-a \leq 1$ or, equivalently, $h: = \frac{1+b-a}{2} \in [0, 1]$. If $F$ admits  
the $h$-percentile set $X_h$, it is mapped by $i_{a, b}$ to the median set of $i_{a, b, *} F : S^2_{a,b} \to \RR$, therefore 
$r_{a,b} (f) = F(X_h)$. This makes the quasimorphisms $r_{a,b}$ a useful tool to extract information about the Reeb graph of a 
Hamiltonian function.

\subsection{Construction of the Hamiltonian $g$}
Let $F : S^1 \times [0, 1] \to \RR$ be a Hamiltonian function
given by $K (\theta, s) = s$ when $s \in [0.01, 0.99]$ and extended to the rest of $\Sigma$ in arbitrary way. The time-$t$ map $\phi^t$ of $K$ rotates the annulus $A = S^1 \times [0.01, 0.99]$
by $t$ in the $S^1$ coordinate. Let $D \subset A$ be a disk of area $0.8$ and $\Psi : \Sigma \to \RR$ be a smooth function which equals $1$ in $D$ and is supported in a disk of area $0.9$ inside $A$. 
(That is, $\Psi$ is a smooth approximation of the indicator function of $D$.)
The time-$t$ map $\psi^t$ fixes $D$ pointwise but the flow induces a fast rotation outside $\partial D$.
Pick large independent parameters $T, \tau \in \NN$ and consider $g_{T, \tau} := \phi^T \circ \psi^\tau$. Assuming $T$ is an integer, $\phi^T$ translates 
the subannulus $A$ precisely $T$ times around $S^1$, hence fixes $A$ pointwise.
$\psi^\tau$ is supported in $A$, hence $\phi^T$ and $\psi^\tau$ commute.

\begin{figure}[!htbp]
\begin{center}
\includegraphics[width=0.4\textwidth]{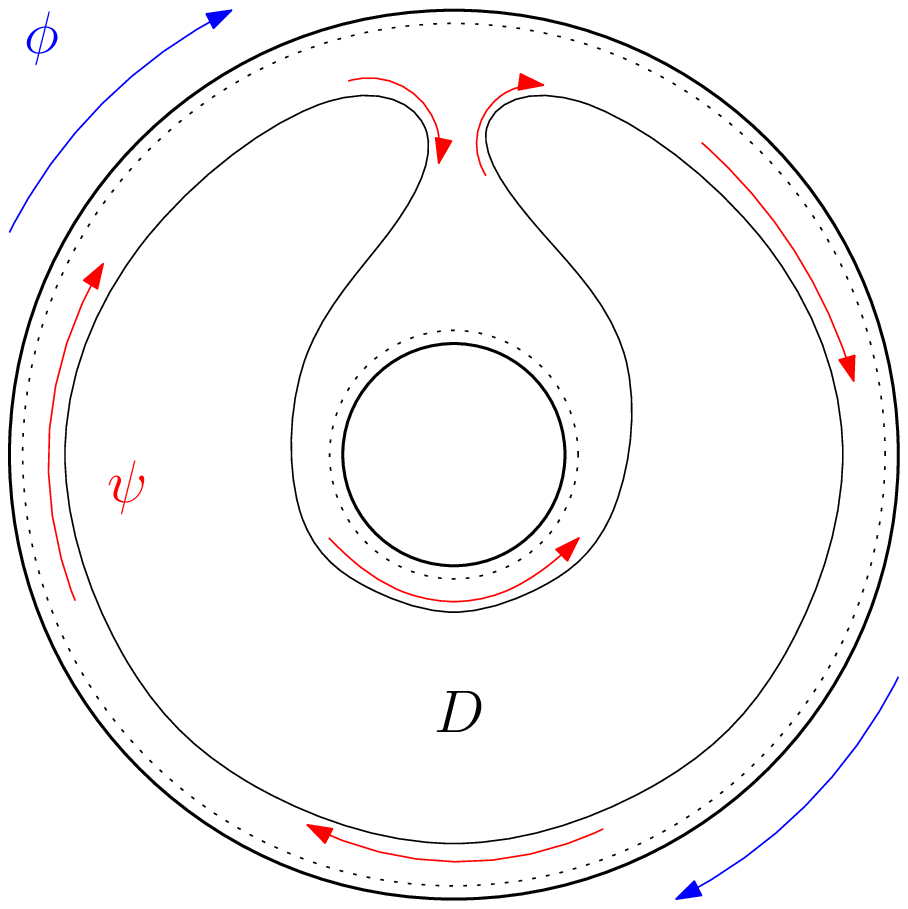}
\caption{$g_{T, \tau}$}
\end{center}
\end{figure}

\subsection{Proof of Theorem~\ref{T:annulus}.}

We claim that $g_{T, \tau}$ is not autonomous. Assume by contradiction that it is generated as the time-$1$ map of a Hamiltonian function $H : \Sigma \to \RR$. 
Suppose first that $H$ is generic, that is, $H$ admits a Reeb tree $T_H$.
We compute the values of $H$ at its percentile sets in two different ways: first, pick $h \in [0.01, 0.99]$. Let $a = 1$ and $b = 2h$ which satisfy $h = \frac{1+b-a}{2}$.
\[
  r_{a,b} (g_{T, \tau}) = r_{a,b} (\phi^T) + r_{a,b} (\psi^\tau) = h T.
\]
The first equality holds because $\phi^T$ and $\psi^\tau$ commute. $r_{a,b} (\phi^T) = h T$ since $Y_h = S^1 \times \{h\}$ is the $h$-percentile for $K$ and $K(Y_h) = h$.
$r_{a,b} (\psi^\tau) = 0$ as the support of $\Psi$ becomes displaceable in $S^2_{a,b}$.
Therefore, if the $h$-percentile $X_h$ exists for $H$, $H (X_h) = h T$.

We perform another computation: fix $h' \in [0.2, 0.8]$. Let $a' = 0.8 - h'$ and $b' = h' - 0.2$. Once again, $h' = \frac{1+b'-a'}{2}$ and
\[
  r_{a',b'} (g_{T, \tau}) = r_{a',b'} (\phi^T) + r_{a',b'} (\psi^\tau) = h' T + \tau.
\]
$r_{a',b'} (\phi^T) = h' T$ as before but $r_{a',b'} (\psi^\tau) = \tau$ since $i_{a', b'}$ embeds $\Sigma$ into a sphere of area $1 + a' + b' = 1.6$. So the image of the disk $D$ becomes the median set
for $i_{a', b',*} \Psi$, thus $r_{a',b'} (\psi^\tau)$ can be computed explicitly.
The calculation shows that if the $h'$-percentile $X_{h'}$ exists for $H$, $H (X_{h'}) = h' T + \tau$.

This contradicts the previous result, hence $h$-percentiles do not exist for $h$ in the interval $[0.2, 0.8]$. 
That is, $T_H$ has one or several branches with total measure at least $0.6$. In fact, there must be a single branch of measure at least $0.6$: 
if there are several branches growing out of different points of the stem, there will be intermediate $h$-percentiles which correspond to stem points between the branches. 
In our situation it is not the case. If there are two branches or more growing from the same stem point (which is possible in a non-generic situation), we may perturb $H$ in 
the $C^\infty$-topology and separate the branches. Intermediate percentiles will appear after such perturbation. However, our quasimorphisms $r_{a,b}$ are $C^0$-continuous, so a small perturbation
will not resolve the discrepancy $\tau$ between the results of two computations. 

As a corollary, there must be a branch $B \subset T_H$ with measure at least $0.6$. $D_B = H^{-1} (B)$ is a topological disk in $\Sigma$ of area at least $0.6$ which is an invariant set 
for the flow of $H$. Intuitively, points in $D_B$ have rotation number $0$ with respect to the $S^1$ coordinate
(all points with non-zero rotation number are mapped to the stem). However, most points in $\Sigma$ (up to a subset of area $0.02$) have 
rotation number $T$ under $g_{T, \tau}$, which gives a contradiction. 

\medskip

We reproduce this contradiction using more powerful tools. In ~\cite{Kh:h-spec}, Theorem 2, the author constructs a quasimorphism $\rho_{0.6} : \Ham (\Sigma) \to \RR$ which is $C^0$-continuous and has the following property.
Suppose $g \in \Ham (\Sigma)$ has an invariant disk of area $0.6$ or more, then $\rho_{0.6} (g)$ computes the rotation number (along the $S^1$ coordinate) of points in this disk.
($\rho_{0.6}$ is constructed as a certain combination of Calabi quasimorphisms pulled back from $S^2$ similarly to the construction of $r_{a,b}$.)
Therefore, 
\[
	\rho_{0.6} (g_{T, \tau}) = \rho_{0.6} (\phi^T) + \rho_{0.6} (\psi^\tau) = T.
\]
$\rho_{0.6} (\phi^T) = T$ since $\phi^T$ rotates the annulus $A$ $T$ times around, the same is true for any disk of area $0.6$ in $A$.
$\rho_{0.6} (\psi^\tau) = 0$ since $D$ is a stationary disk of area $0.8$.

This shows that large invariant disks of $g_{T, \tau}$ (if they exist) have rotation number $T$.
On the other hand, invariant disks of an autonomous flow must have rotation number zero. Therefore no such disks exist, 
so the Hamiltonian function $H$ cannot have a large branch. This is a contradiction to the first part of the 
argument where we established existence of a branch $B$. Hence $g_{T, \tau}$ is not autonomous.

If $H$ which is supposed to generate $g_{T, \tau}$ is extremely non-generic and its Reeb graph does not exist, we may perturb it and argue as before, since 
the quasimorphisms $r_{a,b}$ and $\rho_{0.6}$ used as tools to arrive to a contradiction are $C^0$-continuous.

\begin{rem}
  $g_{T, \tau}$ is not autonomous in $\Ham(\Sigma)$ but is a composition of two autonomous maps.
  
  However, if one allows Hamiltonian flows and diffeomorphisms in $\Sigma$ whose support is not restricted to the interior, $g_{T, \tau}$ 
  becomes autonomous in this extended group and, in particular, is integrable and has entropy zero.
  To see this, note that for an integer $T$, the map $\phi^T$ which rotates the inner subannulus $A$ around the $S^1$ coordinate, 
  can be generated by another autonomous flow $\widetilde{\phi}^t$. Namely, the one that fixes $A$ pointwise and rotates a tubular neighborhood of $\partial \Sigma$
  in the opposite direction. This flow is generated by $\widetilde{K}(\theta, s) = K (\theta, s) - s$.
  Since $\psi^t$ is supported inside $A$, $\widetilde{\phi}^t$ and $\psi^t$ have disjoint supports, commute and can be combined into a single autonomous flow which
  generates $g_{T, \tau} = \phi^T \circ \psi^\tau$.
\end{rem}
 
 The obstruction for $g_{T, \tau}$ to be autonomous consists of two ingredients:
  \begin{itemize}
	  \item
		 $\forall h \in [0.2, 0.8]. \, r_{a',b'} (g_{T, \tau}) -  r_{a,b} (g_{T, \tau}) = \tau \neq 0$, 
		 hence no percentiles exist in the interval $[0.2, 0.8]$. Therefore there must be a branch of area at least $0.6$.	  
	  \item
	   $\rho_{0.6} (g_{T, \tau}) = T \neq 0$.
	   Therefore $g_{T, \tau}$ cannot have a large invariant disk with rotation number zero, hence no large branches for 
	   the generating function of $g_{T, \tau}$.
  \end{itemize}

  The quasimorphisms $\rho$ and $r$ used in the argument are $C^0$-continuous and Hofer-Lipschitz, hence this obstruction persists under 
  deformations of $g_{T, \tau}$ that are either $C^0$-small or whose Hofer norm is less than $\min (T, \tau)$ divided by appropriate Lipschitz constants.
  This provides a lower bound for the Hofer distance between $g_{T, \tau}$ and the set of autonomous Hamiltonians. In particular,
  $g_{T, \tau}$ arrives arbitrarily far away from autonomous diffeomorphisms if we let $T, \tau \to \infty$.

\section{Proof of Theorem~\ref{T:main}} \label{S:General}

\subsection{Construction of the Hamiltonian $g$}

We consider the following diffeomorphism which is slightly different from the construction in Section~\ref{S:Annulus} but follows the same lines. 
Denote $A = S^1 \times (-2, 2)$. Let $(\theta, s)$ be the coordinates on $A$ 
(in our notation $S^1 = \RR / 2\pi \ZZ$) and equip $A$ with the area form $\dd\theta \wedge \dd s$ so that $A$ has symplectic area $8 \pi$.
Denote by $D = \{\theta^2 + s^2 < 1\}$ a unit disk in $A$.

Denote by $K : A \to \RR$ the Hamiltonian function $K (\theta, s) = s$. Its Hamiltonian vector field $X_K = \partial / \partial \theta$, 
so the Hamiltonian flow $\phi^t_K$ translates each point in $A$ by $t$ in the $S^1$ coordinate: $\phi^t_K (\theta, s) = (\theta + t, s)$.
Let $\varphi = \phi^\pi_K$ -- half rotation of the annulus. Note that $\varphi (D) \cap D = \emptyset$ while $\varphi^2 = \Id_A$. 
In particular, $\varphi^2$ fixes $D$ pointwise.

Let $\psi: D \to D$ be an autonomous diffeomorphism which rotates circles $\{\theta^2 + s^2 = r^2\}$ counterclockwise by an angle $\alpha(r)$.
We ask that $\alpha (r) > 0.2 $ in the inner disk $\{ r < 0.7\}$, equals zero for $r \geq 0.9$ 
and decreases monotonously with $r$. Clearly, $\psi$ is area-preserving. As it is compactly supported in $D$, $\psi$ extends to the annulus $A$ by identity.

Without loss of generality, we suppose that the area of $\Sigma$ is greater than $8 \pi$ so that $A$ admits a symplectic embedding $i : A \to \Sigma$ 
(it is not important whether $i (A)$ is contractible in $\Sigma$ or not). 
In what follows we identify $A$ with its image in $\Sigma$. We extend $K$ to the rest of $\Sigma$ in arbitrary way 
(for example, by constants, if $\Sigma$ is closed and $\Sigma \setminus A$ is disconnected, or by cutting $K$ off near $\partial A$). 
This induces an autonomous extension of the Hamiltonian vector field $X_K$ and its flow $\phi^t_K$ to $\Sigma$.
$\varphi = \phi^\pi_K : \Sigma \to \Sigma$ is an autonomous extension of $\varphi: A \to A$ which was discussed earlier.
$\psi$ extends outside $A$ by the identity map. Let $g = \varphi \circ \psi$. 

\begin{figure}[!htbp]
\begin{center}
\includegraphics[width=0.3\textwidth]{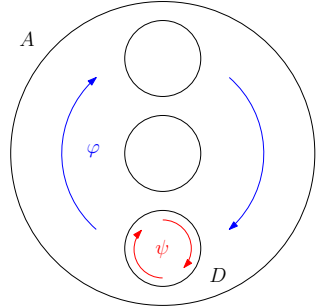}
\caption{The map $g$}
\end{center}
\end{figure}

\subsection{Proof of Theorem~\ref{T:main}.}

\begin{lem}
$g$ is integrable. In particular, it has entropy zero.
\end{lem}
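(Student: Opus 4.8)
The plan is to exhibit an explicit first integral $H:\Sigma\to\RR$ for $g=\varphi\circ\psi$ that is non-constant on every open set, and then invoke the result quoted in Section~\ref{S:Tools} (via \cite{Ka:Lyap-ent}) that integrable surface maps have zero entropy. The natural candidate comes from the rotational symmetry of the picture: both $\varphi$ (half rotation $(\theta,s)\mapsto(\theta+\pi,s)$) and $\psi$ (rotation of the round disk $D$ by an angle depending only on the radius) preserve the family of concentric circles $\{\theta^2+s^2=r^2\}$ inside $D$ and their $\varphi$-translates inside $\varphi(D)$. So I would take $H$ to be the function whose level sets are these circles on $D\cup\varphi(D)$ — e.g. $H=\theta^2+s^2$ on $D$, $H=(\theta-\pi)^2+s^2$ on $\varphi(D)$ (using $\varphi(D)\cap D=\emptyset$) — and extend $H$ to all of $\Sigma$ so that it is locally non-constant and invariant under $g$.

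The key steps, in order. First, observe that $\psi$ is supported in $D$, $\varphi(D)\cap D=\emptyset$, and $\varphi^2=\Id_A$, so $g=\varphi\circ\psi$ maps $D$ to $\varphi(D)$ (since $\psi(D)=D$), maps $\varphi(D)$ to $D$ (since $\psi$ is the identity off $D$, $g$ restricted to $\varphi(D)$ is just $\varphi$, and $\varphi(\varphi(D))=D$), and maps the complement $A\setminus(D\cup\varphi(D))$ to itself. Second, check invariance of the proposed $H$ on $D\cup\varphi(D)$: a point at radius $r$ in $D$ is sent by $\psi$ to radius $r$ in $D$, then by $\varphi$ to the corresponding radius-$r$ circle about the center of $\varphi(D)$; a point at radius $r$ in $\varphi(D)$ is sent by $g$ to radius $r$ in $D$; hence $H\circ g=H$ on $D\cup\varphi(D)$. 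Third, handle the complement: on $\Sigma\setminus(D\cup\varphi(D))$, which is $g$-invariant, one needs an invariant function that is locally non-constant. Here I would use that $g$ on this region equals $\varphi=\phi^\pi_K$ there (since $\psi$ is supported in $D$), and that $\varphi$ preserves the $s$-coordinate; on the annular region $A\setminus(D\cup\varphi(D))$ set $H$ to depend (locally non-constantly) on $s$, and on $\Sigma\setminus A$, where $g$ can be arranged to be the identity or where $K$ was extended, pick $H$ to be any locally non-constant function invariant under whatever $g$ does there (e.g. a generic function pulled back appropriately, or simply a small generic perturbation of a constant on the fixed-point set). Fourth, I must make sure the pieces glue to a genuine function on $\Sigma$ that is non-constant on every open set — this requires a little care near $\partial D$ and $\partial\varphi(D)$, where the ``radial'' integral must be matched continuously (and, if one wants, smoothly away from critical points) with the ``$s$-coordinate'' integral; one can do this because $\psi$ and $\varphi$ are the identity near $\partial D$, so the two descriptions already agree there up to reparametrization.

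I expect the main obstacle to be exactly this gluing/extension issue: producing a single globally defined first integral $H$ on all of $\Sigma$ that is simultaneously $g$-invariant and non-constant on every open subset, including on $\Sigma\setminus A$ and across the seams $\partial D$, $\partial\varphi(D)$, rather than a mere collection of locally defined invariant functions. The dynamics on each invariant piece is transparent, so the content is in organizing the pieces compatibly. Once $H$ is constructed, the conclusion that $g$ has zero entropy is immediate from the discussion in Section~\ref{S:Tools}: $H$ induces a $g$-invariant lamination by level sets which is regular off the critical set, and by \cite{Ka:Lyap-ent} positive topological entropy would force a transverse homoclinic point, impossible in the presence of such an invariant lamination. (In fact one can say more — $g$ restricted to $D\cup\varphi(D)$ is conjugate, via $\varphi$, to a composition that is an honest rotation-type map, and off that set $g$ is a reparametrized rotation of the annulus — but for the lemma only integrability is needed.)
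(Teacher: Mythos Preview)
Your approach is essentially the paper's: exhibit a $g$-invariant lamination (concentric circles in $D$ and in $\varphi(D)$, horizontal curves away from them, glued by transitional leaves), realize it as level sets of a first integral, and conclude zero entropy via \cite{Ka:Lyap-ent}. The paper does exactly this, except that it argues the lamination is preserved by $\varphi$ and by $\psi$ separately (hence by $g$), appeals to a figure for the picture, and simply asserts that a generating function exists.

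One slip to fix: $\varphi$ is \emph{not} the identity near $\partial D$ --- it is the half-rotation $(\theta,s)\mapsto(\theta+\pi,s)$ everywhere on $A$ --- and the radial level sets do \emph{not} ``agree up to reparametrization'' with the $s$-level sets along $\partial D$ (the circle $\partial D$ is a single radial level but meets every $s$-level with $|s|\le 1$). The correct reason the gluing goes through is only that $\psi=\Id$ for $r\ge 0.9$; hence on the region outside the two inner disks of radius $0.9$ the sole constraint is $\varphi$-invariance, i.e.\ $\pi$-periodicity in $\theta$, and there one interpolates between the circles $r=0.9$ and the horizontal curves by a family of $\pi$-periodic transitional leaves (this is what the paper's figure depicts). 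With that correction your outline is fine.
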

\begin{proof}
The figure below provides a lamination which is preserved both by $\varphi$ and $\psi$, 
hence also by $g = \varphi \circ \psi$. This lamination extends outside the annulus $A$ to the rest of $\Sigma$ by level sets of the 
autonomous Hamiltonian $K$ which generates $\varphi$. Clearly, this extended lamination is also invariant under $\varphi \circ \psi$.
It is not difficult to construct a function which generates this lamination by level sets.

\begin{figure}[!htbp]
\begin{center}
\includegraphics[width=0.4\textwidth]{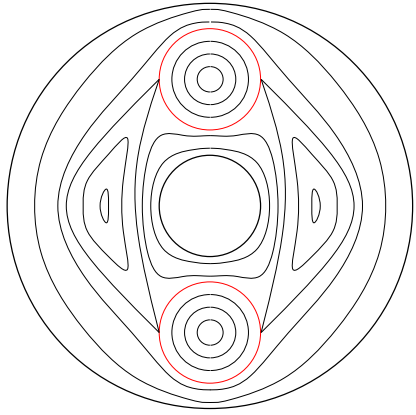}
\caption{Invariant lamination}
\end{center}
\end{figure}

\end{proof}

In our argument that $g$ cannot be approximated by autonomous diffeomorphisms we will focus on the annulus $A$, the dynamics outside $A$ is not important for the proof.
Very roughly, if $g$ had been generated by an autonomous flow, this flow would have eventually displaced $D$ (since $g(D) \cap D = \emptyset$) 
then brought it back later on applying a rotation around the center of $D$ (indeed $g^2 (D) = D$ as a set and $g^2|_D = \psi$). 
It turns out that a given autonomous flow can perform either of these deformations but not both, and this discrepancy persists under $C^0$ perturbations.

\medskip

Since the angle $\alpha(r)$ of rotation by $\psi$ increases from zero near $\partial D$ and becomes greater than $0.2$ 
near the center of $D$, there is a radius $r_{0.2}$ which corresponds to points 
which are rotated precisely by $\alpha(r_{0.2}) = 0.2$. 

Since all Riemannian metrics of $M$ are comparable on $D$ and define there the same notion of convergence, in further computations we will 
assume without loss of generality that the Riemannian metric of $M$ restricts to $\dd s^2 + \dd \theta ^2$ on $D$.
Assume by contradiction that there exists an autonomous Hamiltonian $h$ 
which is $C^0$-close to $g$. Note that $d_{C^0} (h^2, g^2) \to 0$ as $d_{C^0}(h, g)\to 0$,
so we may assume that $d_{C^0} (h, g) < 0.1$ and $d_{C^0} (h^2, g^2)<0.1$. 
Let $\chi$ be an autonomous flow which generates $h$.
We note that, similar to $g$, $h$ displaces $D$. Hence $h$ has no fixed points inside the disk, which means no equilibrium points for $\chi$ in $D$ 
(zeroes of the underlying vector field).

\begin{lem}
  $h^2$ has a fixed point in $D$.
\end{lem}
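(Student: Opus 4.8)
The plan is to run a winding-number (degree) argument for the displacement map $x\mapsto h^2(x)-x$ on a round subdisk of $D$, exploiting that $h^2$ is $C^0$-close to $g^2$ and that $g^2|_D=\psi$.

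First I would record the shape of $g^2$ on $D$. Since $\psi$ is supported in $D$, $\varphi(D)\cap D=\emptyset$ and $\varphi^2=\Id$ on $A$, one has $g^2=(\varphi\psi\varphi^{-1})\circ\psi$ with $\varphi\psi\varphi^{-1}$ supported in $\varphi(D)$, so $g^2$ agrees with $\psi$ on $\bar D$; in particular $g^2$ maps $\bar D$ onto itself and rotates each circle $\{r=\rho\}$ about the center $0$ of $D$ by the angle $\alpha(\rho)$ (this is the already-noted identity $g^2|_D=\psi$). Now fix the circle $C_0=\{r=r_{0.2}\}$, on which $\psi$ is the rotation $R$ by exactly $0.2$; since $\alpha>0.2$ on $\{r<0.7\}$, we have $r_{0.2}\ge 0.7$. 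For $x\in C_0$ we get $g^2(x)-x=(R-I)x$, hence $\|g^2(x)-x\|=r_{0.2}\cdot 2\sin(0.1)\ge 1.4\sin(0.1)>0.13$, whereas $\|h^2(x)-g^2(x)\|\le d_{C^0}(h^2,g^2)<0.1$. Therefore $h^2$ has no fixed point on $C_0$, and the segment homotopy $(g^2(x)-x)+(1-t)\bigl(h^2(x)-g^2(x)\bigr)$ never vanishes on $C_0$; so $x\mapsto h^2(x)-x$ and $x\mapsto g^2(x)-x$ are homotopic as maps $C_0\to\RR^2\setminus\{0\}$ and have the same degree.

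Then I would compute that degree: $R-I$ is a linear automorphism of $\RR^2$ with $\det(R-I)=2-2\cos(0.2)>0$, hence orientation-preserving, so $x\mapsto(R-I)x$ has degree $+1$ on $C_0$, and therefore so does $x\mapsto h^2(x)-x$. Finally, $x\mapsto h^2(x)-x$ is continuous on the whole closed disk $\bar D_0$ bounded by $C_0$, and takes values in the chart about $\bar D$ (its image lies within $0.1$ of $\psi(\bar D_0)=\bar D_0$, hence inside $D$, recalling $r_{0.2}<0.9$); a continuous $\RR^2$-valued map on a disk whose boundary restriction has nonzero degree must vanish somewhere in the interior, since otherwise the boundary map would be null-homotopic in $\RR^2\setminus\{0\}$ and its degree would be $0$. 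Such a zero in $\mathrm{int}\,\bar D_0\subset D$ is the desired fixed point of $h^2$.

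The only genuinely quantitative point to watch — and the main thing that makes the argument go through — is that the prescribed $C^0$-radius $0.1$ is really smaller than the minimal displacement of $\psi$ on the test circle, which rests on the constants built into the construction ($\alpha>0.2$ on $\{r<0.7\}$ forces $r_{0.2}\ge 0.7$, and $0.7\cdot 2\sin(0.1)>0.1$); the rest is routine plane topology together with the minor bookkeeping that $h^2(\bar D_0)$ stays in a single coordinate chart. Note that this argument uses nothing about the generating flow $\chi$ beyond $h$ being a diffeomorphism; the absence of equilibria of $\chi$ in $D$ will only be needed afterwards, when this fixed point of $h^2$ is turned into the contradiction.
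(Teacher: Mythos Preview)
Your argument is correct and follows essentially the same route as the paper: both compute the degree (equivalently, winding number) of the displacement map $p\mapsto h^2(p)-p$ on the circle $\{r=r_{0.2}\}$ and conclude it must vanish in the enclosed disk. The paper phrases this via Poincar\'{e}--Hopf after rotating the vector field by $-\pi/2$ so that it points outward, while you do the equivalent homotopy-and-degree computation directly; your version is more explicit about the quantitative inequality $r_{0.2}\cdot 2\sin(0.1)>0.1$ that makes the homotopy nonvanishing, whereas the paper leaves this implicit in the phrase ``$X$ also points counterclockwise.''
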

\begin{proof} We construct a vector field $X \in \Vect (D)$ by $X (p) = h^2 (p) - p$. Note that the vector field $g^2(p) - p$ 
points counterclockwise on the circle $\{ s^2 + \theta^2 = r^2_{0.2} \}$. Since $h^2$ is close to $g^2$, $X$ also points counterclockwise on this circle. 
Hence the rotated vector field $e^{-i \frac{\pi}{2}} X$ points outwards the disk 
$\{ s^2 + \theta^2 \leq r^2_{0.2} \}$ and has at least one zero there by the Poincar\'{e}-Hopf theorem. 
This implies that $X$ has a zero as well, which is a fixed point for $h^2$.
\end{proof}


Note that fixed points $p$ of an autonomous diffeomorphism $h$ on a surface can be of two types. 
Either $p$ is an equilibrium point for the flow or the flow line $c$ to which $p$ belongs is a simple loop. 
Moreover, if the time-$t$ map of the flow traverses $c$ $n$ times, the same is true for any other point in $c$. Namely,
the curve $c$ is fixed by $h$ pointwise.

Pick a fixed point $p$ of $h^2$ in $D' = \{ s^2 + \theta^2 \leq r^2_{0.2} \}$ as in the lemma. 
In our case $h$ has no fixed points in $D$, hence $p$ is not an equilibrium point. Therefore $p$ belongs to a loop $c$ in $\Sigma$ which consists of fixed points of $h^2$.
Note that $\partial D' = \{ s^2 + \theta^2 = r^2_{0.2} \}$ is rotated by $g^2$ by the angle $0.2$. 
Since all $x \in \partial D'$ are displaced by $g^2$ by distance greater than $0.1$, $\partial D'$ cannot contain fixed points of $h^2$.
Therefore $c \cap \partial D' = \emptyset$, which implies $c \subset D'$.
But in this case the disk encircled by $c$ is invariant under the flow $\chi$ which generates both $h^2$ and $h$, hence $h$ cannot displace $D$.
This is a contradiction and the proof follows.

\section{Final remarks}

There are several interesting bi-invariant norms on the group $\Ham(\Sigma)$: Hofer norm ~\cite{Hof:TopProp}, autonomous norm, 
entropy-zero norm and integrable norm ~\cite{Bra-Ma:ent-quas}. 
While it is easy to see that the last three norms are not equivalent to the Hofer norm, 
it is not known whether these norms are inequivalent.

Let us recall the definition of these norms. It is known that every Hamiltonian diffeomorphism of $\Sigma$ can be written as a composition
of autonomous diffeomorphisms and hence of entropy-zero and integrable diffeomorphisms as well. Let $Aut_{\Sigma}$, $Int_\Sigma$ and 
$Ent_\Sigma$ denote the sets of the autonomous, integrable and entropy-zero diffeomorphisms respectively. Since these sets are conjugation invariant,
the word norms: the autonomous norm $\|\cdot\|_{Aut}$, the integrable norm $\|\cdot\|_{Int}$ and the entropy-zero norm $\|\cdot\|_{Ent}$ 
(whose generating sets are $Aut_\Sigma$, $Int_\Sigma$ and $Ent_\Sigma$, respectively) on $\Ham(\Sigma)$ are bi-invariant. Note that 
$Aut_\Sigma\subset Int_\Sigma\subset Ent_\Sigma$, hence for every $f\in\Ham(\Sigma)$ we have 
$$\|f\|_{Ent}\leq \|f\|_{Int}\leq \|f\|_{Aut}\thinspace .$$

\begin{qsnonum}
Are the autonomous, integrable and entropy-zero norms equivalent?
\end{qsnonum}

\begin{rem}
The usual strategy used to try to attack the above question is to construct a quasimorphism which vanishes on one of the generating sets, 
but does not vanish on the other. In particular, an existence of a homogeneous quasimorphism which vanishes on the set $Int_\Sigma$, 
but does not vanish on the set $Ent_\Sigma$ implies that the entropy-zero and integrable norms are not equivalent. 
If, in addition, such quasimorphism is $C^0$-continuous, then this would imply a negative answer to the original question of Katok.
\end{rem}  

We finish the paper with a question that concerns the Hofer norm. In ~\cite{Kh:non-auton-curves} the second author showed that
in case when $\Sigma$ is an annulus, the diffeomorphism constructed in Section~\ref{S:Annulus} can not be presented as a Hofer limit of autonomous Hamiltonians.

\begin{qsnonum}
Let $\Sigma$ be a general surface. Is it true that a diffeomorphism $g$ of $\Sigma$, 
constructed in the general proof of Theorem ~\ref{T:main}, cannot be presented as a Hofer limit of autonomous Hamiltonian diffeomorphisms? 
\end{qsnonum}

\bibliography{bibliography}

\end{document}